\tikzstyle{snode}=[circle ,draw=black,fill=red,thick, inner sep=0pt ,minimum size=1.4mm]
\tikzstyle{bnode}=[circle ,draw=black,fill=black,thick, inner sep=0pt ,minimum size=1.4mm]
\setlist[enumerate]{left=0pt,label=(\roman*),
ref=(\roman*),font=\normalfont,topsep=-1ex,parsep=-.3ex,partopsep=0pt}
\newtheorem{theorem}{Theorem}
\newtheorem{lemma}{Lemma}
\newtheorem{claim}{Claim}
\newtheorem{case}{Case}
\newtheorem{proposition}{Proposition}
\newenvironment{claimproof}{\noindent \textit{Proof.}}{{\hfill\tiny ($\Box$)\newline}}
\title{Minimum  Spanning Trees with Bounded Degrees of Vertices in a Specified Stable Set}
\author{Christoph Brause$^1$, Jochen Harant$^2$, Florian Hörsch$^2$, Samuel Mohr$^{3,}$\thanks{supported by the MUNI Award in Science and Humanities of the Grant
Agency of Masaryk university.}\\
\small $^1$ Institute for Discrete Mathematics and Algebra\\[-0.8ex]
\small TU Bergakademie Freiberg\\[-0.8ex] 
\small Germany\\
\small $^2$ Institute for Mathematics\\[-0.8ex] 
\small TU Ilmenau\\[-0.8ex] 
\small Germany\\
\small $^3$ Faculty of Informatics,\\[-0.8ex]
\small Masaryk University\\[-0.8ex]
\small Czech Republic
}
\begin{document}

\maketitle 

\begin{abstract}

Given a graph $G$ and sets $\{\alpha_v~|~v \in V(G)\}$ and $\{\beta_v~|~v \in V(G)\}$  of non-negative integers, it is known that the decision problem  whether $G$ contains a spanning tree $T$ 
such that $\alpha_v \le d_T (v) \le \beta_v $ for all $v \in  V(G)$ is $NP$-complete. \\In this article,  we relax the problem by  demanding  that the degree restrictions apply to vertices $v\in U$ only, where  $U$ is a stable set of $G$. In this case, the problem becomes tractable.\
 A. Frank presented a result  characterizing the positive instances of that relaxed problem. Using matroid intersection developed by J. Edmonds,  we give a new and short proof of Frank's result and show that if $U$ is  stable and  the edges of  $G$ are weighted by arbitrary real numbers, then even a minimum-cost tree $T$ with $\alpha_v \le d_T (v) \le \beta_v $ for all $v \in  U$ can be found in polynomial time if such a tree exists.

\end{abstract}

\section{Introduction and Main Results}\label{IR}

Spanning trees play a crucial role in network theory as they can be considered the smallest substructures that allow to maintain the connectivity of the network. In \cite{OY}, a survey on results on spanning trees in graphs is presented.

We consider undirected, finite,  and simple graphs $G$ with vertex set $V(G)$ and edge set $E(G)$, 
 use standard notation of graph theory, and refer the reader to \cite{RhD} for details. 
We say that a vertex set $S\subseteq V(G)$ of a graph $G$  is {\em stable} if the subgraph  of $G$ induced by $S$ is edgeless. We wish to remark that this property is often referred to as being ``independent'' in the literature. In order to avoid confusion with independent sets of a matroid, we choose the aforementioned notation.

In the Minimum Bounded Degree Spanning Tree problem \cite{SL}, we are given a  graph $G$,  a real weight function $c:E(G)\rightarrow R$, and  
sets $\{\alpha_v~|~v \in V(G)\}$ and $\{\beta_v~|~v \in V(G)\}$ of non-negative integers such that $\alpha_v \le \beta_v$ for $v\in V(G)$. We say that a spanning tree $T$ of $G$ is feasible if   $\alpha_v \le d_T (v) \le \beta_v $ holds for all $v \in V(G)$.
The task is to decide whether $G$ has a feasible spanning tree $T$ and if this is the case, to compute one of minimum weight. 
 
 It is known that already checking whether there exists a feasible spanning tree for that problem is
$NP$-complete since  a Hamiltonian path of $G$  is exactly a feasible spanning tree for the special case  $\alpha_v=0$ and $\beta_v=2$ for all $v\in V(G)$ \cite{gj}.
On the other hand,  a best possible approximation algorithm for  the Minimum Bounded Degree Spanning Tree problem is proved in \cite{SL}. 

Here we relax the problem by demanding the degree conditions $\alpha_v \le d_T (v) \le \beta_v $ only for vertices $v$ of a stable set $U$ of $G$.
The forthcoming Theorem \ref{1} presents a characterisation of the positive instances for this relaxed problem and gives hope that it could be easier.  We use $\omega(G)$ for the number of components of  $G$. If $S \subseteq V(G)$, then $G-S$ is the subgraph of $G$ obtained from $G$ by removing $S$ and $N(S)$ denotes the set of vertices of $G-S$ adjacent to a vertex of $S$ in $G$. Note that Condition (\ref{beta}) of Theorem \ref{1} in case $S=\emptyset$ implies that $G$ is connected.

\begin{theorem}[A. Frank, \cite{book}, Theorem 9.1.16] \label{1}
Let $G$ be a graph, $U$ be a  stable subset of $V(G)$, and $\{\alpha_v~|~v \in U\}$ and $\{\beta_v~|~v \in U\}$ be sets of non-negative integers such that $\alpha_v\le \beta_v$ for all $v\in U$. Then $G$ contains a spanning tree $T$ such that $\alpha_v\leq d_T(v)\leq \beta_v$ for all $v \in U$  if and only if the following two conditions hold:

 \begin{equation}\label{alpha}
 \sum\limits_{v \in S}\alpha_v\le |S|+|N(S)|-1 \qquad\textnormal{for all~~} \emptyset \neq S\subseteq U, 
\end{equation} 
\begin{equation}\label{beta}
\sum\limits_{v \in S}\beta_v \geq \omega(G-S)+|S|-1 \qquad\textnormal{for all~~} S\subseteq U.
\end{equation}
\end{theorem}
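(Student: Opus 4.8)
The plan is to derive \Cref{1} from the matroid intersection theorem of Edmonds, reading the two conditions off its min--max formula. Write $n=|V(G)|$ and $W:=V(G)\setminus U$. Since $U$ is stable, the edge set splits as $E(G)=E_W\cup\bigcup_{v\in U}E_v$, where $E_v$ is the set of edges incident with $v\in U$ and $E_W$ is the set of edges with both endpoints in $W$; every edge lies in exactly one class and $d_T(v)=|T\cap E_v|$ for each spanning tree $T$ and each $v\in U$. Thus the task becomes: decide whether the graphic matroid $M(G)$, with rank function $r_1$ and $r_1(E(G))=n-1$, has a base $B$ with $\alpha_v\le|B\cap E_v|\le\beta_v$ for all $v\in U$. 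The two rank identities $r_1\big(\bigcup_{v\in S}E_v\big)=|S|+|N(S)|-1$ (when the edges incident with $S$ span a connected bipartite subgraph) and $r_1\big(E(G-S)\big)=n-|S|-\omega(G-S)$ are what will turn \eqref{alpha} and \eqref{beta} into matroid conditions.

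Necessity I would dispatch first by counting. For \eqref{alpha}, the at least $\sum_{v\in S}\alpha_v$ edges of a feasible $T$ incident with a nonempty $S\subseteq U$ form a forest on the $|S|+|N(S)|$ vertices of $S\cup N(S)$ (no edge is counted twice, as $S$ is stable and $N(S)\subseteq W$), so their number is at most $|S|+|N(S)|-1$. For \eqref{beta}, deleting $S$ from $T$ removes exactly $\sum_{v\in S}d_T(v)$ edges and leaves a forest on $n-|S|$ vertices with at least $\omega(G-S)$ components, whence $T$ has at least $\omega(G-S)+|S|-1$ edges incident with $S$ and $\sum_{v\in S}\beta_v\ge\omega(G-S)+|S|-1$; the case $S=\emptyset$ recovers the connectivity of $G$ already noted above.

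For sufficiency I would proceed in two matroid-theoretic stages. First, to realise the upper bounds, let $M_1=M(G)$ and let $M_2$ be the partition matroid on $E(G)$ with capacity $\beta_v$ on each class $E_v$ and no restriction on $E_W$; a common base of $M_1$ and $M_2$ is exactly a spanning tree with $d_T(v)\le\beta_v$ for all $v\in U$. By Edmonds' theorem the maximum size of a common independent set equals $\min_{A\subseteq E(G)}\big(r_1(A)+r_2(E(G)\setminus A)\big)$, and a standard uncrossing shows the minimum is attained at $A=E(G-S)$ for some $S\subseteq U$, where $r_1(A)=n-|S|-\omega(G-S)$ and $r_2(E(G)\setminus A)=\sum_{v\in S}\beta_v$; the value is therefore $n-|S|-\omega(G-S)+\sum_{v\in S}\beta_v$, which is at least $n-1$ for all $S$ precisely when \eqref{beta} holds. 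Second, starting from such a $\beta$-feasible base I would raise the degrees to the lower bounds by an exchange argument driven by \eqref{alpha}: among all $\beta$-feasible bases pick one maximising the covered demand $\Phi(B)=\sum_{v\in U}\min\{|B\cap E_v|,\alpha_v\}$; if some class $E_v$ is deficient, then \eqref{alpha}, applied to a tight connected set $S\ni v$ and combined with $r_1\big(\bigcup_{u\in S}E_u\big)=|S|+|N(S)|-1$, produces an augmenting exchange in $M(G)$ that raises $|B\cap E_v|$ at the expense of an over-supplied class or of $E_W$, strictly increasing $\Phi$ without breaking a cap --- a contradiction. Conceptually this is the statement that the aggregation $B\mapsto(|B\cap E_v|)_{v\in U}$ carries the base polytope of $M(G)$ to the base polytope of an integer polymatroid, which meets the box $\prod_{v\in U}[\alpha_v,\beta_v]$ exactly when \eqref{alpha} and \eqref{beta} hold.

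The main obstacle is the lower bounds: being non-downward-closed they cannot be encoded as a matroid, so the delicate point in the second stage is to perform the degree-raising exchange without destroying a lower bound already met elsewhere; the covered-demand potential $\Phi$ is designed precisely to keep the argument inside the $\beta$-feasible bases while forcing every class up to $\alpha_v$. A minor technical point I would treat separately is the component reduction: for disconnected $S$ the rank $r_1(\bigcup_{u\in S}E_u)$ drops below $|S|+|N(S)|-1$, but applying \eqref{alpha} to the connected pieces of $S$ and summing recovers exactly the rank inequality needed, so the stated condition suffices. Finally, for the weighted claim I would replace both existence steps by weighted matroid intersection: Edmonds' algorithm yields a minimum-cost common base of $M_1$ and $M_2$, and optimising over the polymatroid description of the feasible degree vectors keeps the whole procedure polynomial, producing a minimum-cost feasible tree whenever one exists.
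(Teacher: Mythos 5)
Your necessity argument is correct and is essentially the paper's, and your Stage~1 (trees meeting only the upper bounds, obtained by intersecting the graphic matroid with the partition matroid with capacities $\beta_v$ on the classes $E_v=\delta_G(v)$) is also sound --- it amounts to Case~2 of the paper's proof, and the ``standard uncrossing'' you invoke does go through. The genuine gap is Stage~2, which is the heart of the theorem and is only asserted, not proved. You claim that if a $\beta$-feasible tree $B$ maximizing $\Phi(B)=\sum_{v\in U}\min\{|B\cap E_v|,\alpha_v\}$ still has a deficient vertex $v_0$, then \eqref{alpha} ``produces an augmenting exchange''; but this is exactly what has to be shown, and the natural single-exchange argument fails. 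For $e\in E_{v_0}\setminus B$ it may happen that \emph{every} edge $f$ of the fundamental cycle of $e$ lies in a class $E_w$ with $|B\cap E_w|\le\alpha_w$; then the exchange $B+e-f$ is merely neutral ($\Phi$ gains one unit at $v_0$ and loses one at $w$), so $\Phi$-maximality yields no immediate contradiction, and deficiency just migrates from $v_0$ to $w$. To conclude one needs chains of such neutral exchanges (an augmenting-path argument in the style of the matroid intersection proof), or else the two statements you invoke only ``conceptually'': that partition-aggregation of the base family of $M(G)$ yields the integer points of an integral base polyhedron, and that such a polyhedron meets an integer box if and only if the corresponding rank inequalities hold. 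Both are true, but they are theorems of comparable depth to what is being proved; gesturing at them does not close the central step, and no tight set $S\ni v_0$ violating \eqref{alpha} is actually constructed anywhere in your argument.

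The source of the difficulty is your premise that the lower bounds, ``being non-downward-closed, cannot be encoded as a matroid.'' That is precisely the misconception the paper circumvents: while $\{X:|X\cap E_v|\ge\alpha_v \text{ for all } v\}$ is indeed not an independence system, the family of all sets $B$ with $|B|=|V(G)|-1$ and $\alpha_v\le|B\cap E_v|\le\beta_v$ for all $v\in U$ \emph{is} the base family of a single matroid --- the generalized partition matroid $M_1$ of H\"orsch and Szigeti, with rank function $r_1(X)=\min\{A(X),B(X)\}$ quoted in the paper. With this observation both bounds are handled by \emph{one} application of Edmonds' theorem (Theorem~\ref{edm}) against the cycle matroid: the two terms $A(X)$ and $B(X)$ give exactly the two cases of the paper's proof, which are resolved by extremal choices of $X$ and yield contradictions with \eqref{alpha} and \eqref{beta} respectively, while Propositions~\ref{welld} and \ref{rechnung} absorb the degenerate checks. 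This one-matroid encoding is also what makes the weighted statement (Theorem~\ref{2}) immediate via weighted matroid intersection, whereas your two-stage plan offers no clear way to control the cost through the exchange stage. So: right conceptual skeleton, correct necessity, but the decisive sufficiency step is missing, and the paper's route through $M_1$ is what fills it.
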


The proof of Theorem \ref{1} in \cite{book} uses structural graph theory.
The question of whether a tree as desired in Theorem \ref{1} can be found in polynomial time is not asked and a cost function for the edges of  $G$ is not considered there. 

The remaining content of this article is organized as follows. The Only-If-Part of Theorem~\ref{1} can be seen easily and, for the sake of completeness,  it is given here. Using the theory of matroid intersection developed by Edmonds (\cite{e3},\cite{e2}), we present a new and short proof of the If-Part of Theorem \ref{1}. By this approach, we also can show the forthcoming Theorem \ref{2}.  

\begin{theorem}\label{2}
Given a  graph $G$, a real weight function $c:E(G)\rightarrow R$, an independent set $U\subseteq V(G)$, and sets $\{\alpha_v~|~v \in U\}$ and $\{\beta_v~|~v \in U\}$  of non-negative integers.  
In polynomial time,
we can decide whether $G$ contains a spanning tree $T$ such that $\alpha_v \le d_T (v) \le \beta_v$ for all $v \in  U$.
If such a tree exists, then we can find one of minimum cost in polynomial time.
\end{theorem}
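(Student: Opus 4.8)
The plan is to recast the problem as a single weighted matroid intersection instance and invoke Edmonds' algorithm. First I would dispose of trivialities: if $G$ is disconnected it has no spanning tree at all, so I assume $G$ is connected and set $n=|V(G)|$. The key structural observation is that, because $U$ is stable, no edge of $G$ has both endpoints in $U$; hence the sets $E_v:=\{e\in E(G): v\in e\}$ for $v\in U$ are pairwise disjoint, and together with $E_0:=E(G)\setminus\bigcup_{v\in U}E_v$ they partition $E(G)$. For every spanning tree $T$ and every $v\in U$ we then have $d_T(v)=|E(T)\cap E_v|$, so the prescribed degree bounds turn into bounds on how many edges a tree selects from each block $E_v$.

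I would then introduce two matroids on the ground set $E(G)$. The first is the graphic matroid $M_1$ of $G$, whose bases are exactly the spanning trees and whose rank is $n-1$. The second, $M_2$, is the matroid whose bases are declared to be all sets $X\subseteq E(G)$ with $|X|=n-1$ and $\alpha_v\le |X\cap E_v|\le\beta_v$ for every $v\in U$. With these definitions a set $X$ is a common base of $M_1$ and $M_2$ precisely when it is a spanning tree meeting all degree bounds; thus the feasible trees are exactly the common bases, and a cheapest feasible tree is a minimum-cost common base. Edmonds' weighted matroid intersection algorithm then decides existence and, when a common base exists, returns one of minimum cost with respect to $c$ in polynomial time, provided $M_1$ and $M_2$ admit polynomial independence oracles.

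The heart of the argument --- and the step I expect to be the main obstacle --- is verifying that $M_2$ really is a matroid. Upper bounds alone would merely yield a partition matroid, and lower bounds in isolation are not matroidal; it is the combination with the fixed cardinality $n-1$ that rescues the structure. I would first check that the declared base family is nonempty (a short arithmetic feasibility test on the numbers $\alpha_v,\beta_v,|E_v|,|E_0|$ and $n-1$; if it fails, the answer is simply ``no''), and then verify the basis-exchange axiom directly. Given two such sets $X,Y$ and an element $e\in X\setminus Y$ lying in a block $E_a$, I distinguish whether deleting $e$ keeps block $a$ above its lower bound. If it does, a counting argument on the block-size differences between $X$ and $Y$ produces some $f\in Y\setminus X$ whose block is still below its upper bound, so that $X-e+f$ is again of the required type; if it does not, the same counting forces an element $f\in(Y\setminus X)\cap E_a$, and reinserting it restores block $a$. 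In either case the exchange succeeds, so $\mathcal{B}$ is the base family of a matroid.

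Finally I would observe that both oracles are efficient. Independence in $M_1$ is just acyclicity, while independence in $M_2$ amounts to deciding whether a given partial selection $X$ can be completed to a base, that is, whether the residual block capacities admit a completion to total size $n-1$ respecting all $\alpha_v$ and $\beta_v$; this is a bipartite degree-constrained (transversal-type) feasibility question solvable in polynomial time. Supplying these oracles meets the hypotheses of Edmonds' algorithm and completes the proof of \cref{2}.
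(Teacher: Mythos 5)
Your proposal is correct and follows essentially the same route as the paper: the same two matroids (the cycle matroid of $G$ and the generalized partition matroid whose bases are the sets of size $|V(G)|-1$ respecting the degree bounds on $U$), the identification of feasible spanning trees with common bases, and Edmonds' weighted matroid intersection algorithm (Theorem~\ref{edmalg}). The only difference is that you verify nonemptiness and the basis-exchange axiom for the degree-bound matroid directly, whereas the paper imports its well-definedness (Proposition~\ref{welld}) and rank function from \cite{hs}.
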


\section{Proofs of  Theorem \ref{1} and Theorem \ref{2}}\label{strucalg}

This section is dedicated to proving  Theorem \ref{1} and Theorem \ref{2}. We review the necessary matroidal tools,  introduce the concrete  matroids that we need, and   proceed to the actual proofs of Theorems  \ref{1} and \ref{2}.

{\color{red}In this article, all matroids in consideration are finite.} For basics in matroid theory, see \cite[Chapter~5]{book}. A brief summary of the  definitions of a  matroid by independent sets or by bases and to the axioms of its rank function can be found in  \cite{wiki}. 
We need two results of Edmonds on the intersection of two matroids. The first one is of algorithmic nature and can be found in~\cite{e2}. Recall that a basis of a matroid $M$ is a maximum independent set of $M$.

\begin{theorem}\label{edmalg}
Let $M_1,M_2$ be two matroids on a common  weighted ground set $E$ with polynomial time independence oracles for
$M_1$ and $M_2$ being available.
Then a minimum-weight common basis $B$ of $M_1$ and $M_2$ (if exists) can be
found in  polynomial time.
\end{theorem}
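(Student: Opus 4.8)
The plan is to reduce the search for a minimum-weight common basis to a sequence of weighted augmentation steps, realising Edmonds' matroid intersection algorithm. Observe first that a common basis exists if and only if the maximum size of a common independent set of $M_1$ and $M_2$ equals $r(M_1)=r(M_2)$; in that case every common basis has this common size $r$, so a minimum-weight common basis is exactly a minimum-weight common independent set of size $r$. Hence it suffices to maintain, for $k=0,1,\dots$, a common independent set $I_k$ of size $k$ that has minimum weight among all common independent sets of size $k$, to stop when no larger common independent set exists, and to return the set for $k=r$ if that size was reached (and report infeasibility otherwise).

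First I would implement the exchange step. Starting from $I_0=\emptyset$, given an extremal $I_k$, I construct the \emph{exchange digraph} $D_{I_k}$ on vertex set $E$: for $x\in I_k$ and $y\in E\setminus I_k$ there is an arc $x\to y$ whenever $I_k-x+y$ is independent in $M_1$, and an arc $y\to x$ whenever $I_k-x+y$ is independent in $M_2$. The source set collects those $y\notin I_k$ with $I_k+y$ independent in $M_1$, and the sink set those $y\notin I_k$ with $I_k+y$ independent in $M_2$. Each arc and each source/sink membership is decided by a single call to an independence oracle, so $D_{I_k}$ is built with $O(|E|^2)$ oracle calls. Assigning each $y\notin I_k$ the length $c(y)$ and each $x\in I_k$ the length $-c(x)$, I search for a source-to-sink path $P$ of minimum total length and, among those, with fewest arcs; if such a path exists I set $I_{k+1}=I_k\,\triangle\,V(P)$, and otherwise the current size is maximum and the procedure stops. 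Since lengths may be negative I would use a Bellman--Ford-type shortest-path routine; the invariant below guarantees the absence of a negative closed walk that could obstruct it.

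The main obstacle is the correctness of this exchange step: that $I_{k+1}$ is again a common independent set and is of minimum weight among all common independent sets of its size. I would establish this through the standard exchange lemmas of matroid intersection rather than reproving the exchange axioms. That $I_{k+1}$ is independent in both matroids follows from the \emph{no-shortcut} property of a fewest-arc augmenting path combined with the matroid exchange axiom: minimality of $P$ forces a unique perfect matching between the entering and leaving elements in each $M_i$, which certifies independence after the swap (Krogdahl's lemma, \cite{e3}). That weight-minimality is preserved is a potential/uncrossing argument: if $I_k$ is a minimum-weight common independent set of size $k$, then $D_{I_k}$ carries no negative-length directed cycle, a cheapest path $P$ yields $c(I_{k+1})=c(I_k)+\mathrm{length}(P)$, and this increment is the least achievable, so minimality propagates from $k$ to $k+1$. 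The delicate point to write out is exactly this invariant together with the tie-breaking by fewest arcs, which is what both preserves optimality and rules out negative cycles.

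Finally I would account for the running time. There are at most $r\le|E|$ augmentation phases; each phase rebuilds $D_{I_k}$ with $O(|E|^2)$ oracle calls and performs one shortest-path computation in $O(|E|^3)$ arithmetic operations on the real weights. The total is therefore polynomial in $|E|$ and in the cost of a single oracle call, as claimed. I would state carefully that all comparisons and additions are carried out on the given real weights, so the stated bound counts oracle calls and arithmetic operations and does not depend on the bit-length of the weights.
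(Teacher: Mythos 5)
The paper offers no proof of Theorem~\ref{edmalg} for you to be compared against: the theorem is imported verbatim as Edmonds' weighted matroid intersection result, cited from \cite{e2}, and the paper's actual contribution consists of constructing the matroids $M_1$ and $M_2$ that are fed into it. Your proposal is therefore not an alternative to anything in the paper but a reconstruction of the cited black box, and as a reconstruction it follows the classical augmenting-path route correctly: the reduction of a minimum-weight common basis to a minimum-weight common independent set of size $r(M_1)=r(M_2)$ is sound; your exchange digraph (arcs $x\to y$ for $M_1$-exchanges, $y\to x$ for $M_2$-exchanges, sources and sinks defined via single-element extensions) matches the standard construction; the lengths $c(y)$ on entering and $-c(x)$ on leaving elements are the right potential; and the two-level selection rule (minimum length, then fewest arcs) is exactly the device that makes both correctness claims go through. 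The complexity accounting is also correct, and your closing remark that the bound counts oracle calls and arithmetic operations, independently of the bit-length of the real weights, is a genuine strengthening worth stating. What keeps this from being a complete proof is precisely what you flag yourself: the two load-bearing lemmas---that a minimum-length, fewest-arc path $P$ yields a common independent set $I_k\triangle V(P)$ via the unique-perfect-matching certificate, and that extremeness of $I_k$ propagates to $I_{k+1}$ and excludes negative cycles (which is also what licenses Bellman--Ford)---are invoked rather than proved, and they constitute essentially the entire technical content of the theorem. Deferring them to the literature is defensible, given that the paper defers the whole theorem, but your attribution is wrong: \cite{e3} is Edmonds' short abstract on submodular functions and polyhedra and contains neither the unique-matching lemma (due to Krogdahl) nor the weighted augmentation invariant (due to Iri and Tomizawa, and independently Frank and Lawler). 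With the references corrected, your sketch is a faithful summary of the proof the paper deliberately leaves to \cite{e2}.
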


The next one contains a structural characterization and can be found in \cite{e3}.

\begin{theorem}\label{edm}
Let $M_1,M_2$ be two matroids on a common  ground set  $E$ with rank functions  $r_1,r_2$, respectively, and let $\mu$ be a positive integer. Then there exists  $I \subseteq  E $ with $|I|=\mu$
such that $I$ is a common independent set of $M_1$ and  $M_2$ if and only if  $r_1(X)+r_2( E\setminus X)\geq \mu$ for all $ X \subseteq  E$.
\end{theorem}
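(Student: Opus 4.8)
I would deduce the theorem from the min--max equality
\begin{equation*}
\max\{\,|I| : I\subseteq E\text{ common independent set of }M_1,M_2\,\}\;=\;\min_{X\subseteq E}\bigl(r_1(X)+r_2(E\setminus X)\bigr).
\end{equation*}
This suffices: since every subset of a common independent set is again a common independent set, a common independent set of size exactly $\mu$ exists if and only if the maximum such size is at least $\mu$, which by the displayed equality is equivalent to $r_1(X)+r_2(E\setminus X)\ge\mu$ for all $X\subseteq E$. The inequality $\le$ (which gives the ``only if'' direction of the theorem) is immediate: for a common independent set $I$ and any $X$, the set $I\cap X$ is independent in $M_1$ and $I\setminus X$ is independent in $M_2$, whence $|I|=|I\cap X|+|I\setminus X|\le r_1(X)+r_2(E\setminus X)$.

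For the reverse inequality I would use the augmenting-path method on an exchange digraph. Given a common independent set $I$, define $D_I$ on vertex set $E$ with an arc $i\to j$ whenever $i\in I$, $j\notin I$, and $I-i+j$ is independent in $M_1$, and an arc $j\to i$ whenever $j\notin I$, $i\in I$, and $I-i+j$ is independent in $M_2$. Put $Y_1=\{j\notin I: I+j\text{ independent in }M_1\}$ and $Y_2=\{j\notin I: I+j\text{ independent in }M_2\}$. The heart of the argument is the \emph{augmentation lemma}: if $Y_1\cap Y_2\neq\emptyset$, or there is a directed path from $Y_1$ to $Y_2$ and $P$ is one using the fewest arcs, then $I\triangle V(P)$ is again a common independent set, of size $|I|+1$.

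Verifying the augmentation lemma is where the real work lies, and it is the step I expect to be the main obstacle. It reduces to a single-matroid statement: if $I$ is independent in a matroid $M$ and one performs the swaps prescribed by the arcs of $P$ lying in that matroid's part of $D_I$, the result stays independent \emph{provided the associated exchange bipartite graph has a unique perfect matching}. Choosing $P$ to be shortest is precisely what guarantees it has no ``shortcut'' arcs, and this no-chord property is exactly what forces the relevant matching to be unique; I would establish the unique-matching lemma by induction, using fundamental circuits and the circuit elimination axiom.

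Finally, if no directed $Y_1$--$Y_2$ path exists, I would extract the optimal certificate. Let $U$ be the set of vertices reachable from $Y_1$ in $D_I$, so that $Y_2\cap U=\emptyset$ and no arc leaves $U$. Taking $X=E\setminus U$, a short argument with fundamental circuits shows that $I\cap U$ spans $U$ in $M_2$ while $I\setminus U$ spans $E\setminus U$ in $M_1$: any fundamental circuit created by an element of $U$ (respectively of $E\setminus U$) would otherwise produce an arc leaving $U$, a contradiction. Hence $r_1(E\setminus U)+r_2(U)=|I\setminus U|+|I\cap U|=|I|$, so a maximum common independent set yields a set $X$ attaining equality; this proves the min--max equality. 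In particular, if a maximum common independent set had size below $\mu$, this $X$ would contradict the hypothesis $r_1(X)+r_2(E\setminus X)\ge\mu$, completing the ``if'' direction.
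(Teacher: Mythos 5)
The paper does not prove this theorem at all: it is quoted as a known result of Edmonds (reference \cite{e3}) and used as a black box, so there is no in-paper argument to compare yours against. What you have written is the standard textbook proof of the matroid intersection theorem (as in Schrijver, or Chapter 41-style augmenting-path arguments, essentially Edmonds' original method), and your outline is correct. In particular, your reduction of the ``exactly $\mu$'' statement to the min--max equality via heredity is right; the easy inequality $|I|=|I\cap X|+|I\setminus X|\le r_1(X)+r_2(E\setminus X)$ is right; and your certificate extraction is right with the orientation you chose: taking $U$ reachable from $Y_1$, so that no arc leaves $U$, the fundamental-circuit argument gives $r_2(U)\le |I\cap U|$ (a circuit element in $I\setminus U$ would yield an $M_2$-arc out of $U$) and $r_1(E\setminus U)\le |I\setminus U|$ (a circuit element in $I\cap U$ would yield an $M_1$-arc out of $U$), whence $X=E\setminus U$ attains $r_1(X)+r_2(E\setminus X)\le |I|$.

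One detail is glossed over in your augmentation lemma and should be made explicit in a full write-up. The unique-perfect-matching lemma (if the exchange bipartite graph between $I\setminus J$ and $J\setminus I$ has a unique perfect matching, then $J$ is independent) applies to \emph{equicardinal} exchanges, but $J=I\triangle V(P)$ has one more element than $I$: for the path $z_0,y_1,z_1,\dots,y_k,z_k$ with $z_i\notin I$, $y_i\in I$, the $M_1$-arcs pair $y_i$ with $z_i$ and leave $z_0$ unmatched (symmetrically $z_k$ for $M_2$). The standard repair uses shortest-path minimality once more: the internal vertices $z_1,\dots,z_k$ cannot lie in $Y_1$ (else a shorter path exists), so they are spanned by $I$ in $M_1$; hence $I'=I-\{y_1,\dots,y_k\}+\{z_1,\dots,z_k\}$, obtained from the unique-matching lemma, satisfies $\mathrm{span}_1(I')=\mathrm{span}_1(I)$, and since $I+z_0$ is independent in $M_1$, $z_0\notin\mathrm{span}_1(I')$, so $J=I'+z_0$ is independent in $M_1$; symmetrically for $M_2$. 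With that supplement your proposal is a complete and correct proof, and it moreover delivers the polynomial-time augmentation subroutine underlying Theorem~\ref{edmalg}, which the paper likewise only cites.
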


Let $M$ be a  matroid on ground set $E$ and $\cal{I}$ be its set of independent sets.
 Recall that the rank function $r$ of  $M$ is a mapping from the power set of $E$ into the set  of non-negative integers with $r(X)=\max \{|I|~|~I\subseteq X, I\in \cal{I}  \}$ for $X\subseteq E$. 	Throughout this paper, we use the following properties of the rank function $r$:
	\begin{itemize}[topsep=0em, partopsep=0em, parsep=0em, itemsep=0em]
		\item subcardinality, i.e.~$r(X)\le |X|$ for all $X\subseteq E$,
		\item submodularity, i.e.~$r(X\cup Y)+r(X\cap Y)\le r(X)+r(Y)$ for all $X,Y\subseteq E$,
		\item monotonicity, i.e.~$r(X)\le r(Y)$ for all $X\subseteq Y \subseteq E$.
	\end{itemize}
 
Given  a stable set $U\subseteq V(G)$ of $G$ and sets $\{\alpha_v~|~v \in U\}$ and $\{\beta_v~|~v \in U\}$  of non-negative integers, we now describe two   matroids with common ground set $E(G)$ defined by bases and by independent sets, respectively,  that we need for the proof of Theorems \ref{1} and \ref{2}.  We acknowledge that similar ideas have been used by Edmonds when dealing with arborescence packings \cite{e4}.

Let $G$ be a graph. For $S \subseteq V(G)$, let  $\delta_G(S)$ denote the set of edges of $E(G)$ with exactly one
endvertex in $S$ and we  use $d_G(S)$ for $|\delta_G(S)|$, where a single element set $\{x\}$ is abbreviated
to $x$.  Further, if $X\subseteq E(G)$, then   $\omega_G(X)$ stands for the number of components of the graph with vertex set $V(G)$ and edge set $X$.

The \emph{first matroid $M_1$} is a \emph{generalized partition matroid}, where a set $B\subseteq E(G)$ is a basis of~$M_1$ if $\alpha_v \leq |B \cap \delta_{G}(v)|\leq \beta_v$ holds for all $v \in U$ and $|B|=|V(G)|-1$.
In \cite{hs}, the rank function $r_1$ of $M_1$  for  $X \subseteq E(G)$ is proven to be $$r_1(X)=\min\{A(X),B(X)\},$$ where
\[A(X)=|V(G)|-1-\sum\limits_{v \in U}\max\{\alpha_v-|X \cap \delta_{G}(v)|,0\}\]
and 
\[B(X)=\sum\limits_{v \in U}\min\{\beta_v,|X \cap \delta_{G}(v)|\}+|X\setminus \delta_G(U)|.\]

 The following Proposition \ref{welld} is needed to make sure that the   bases $B$  of $M_1$  fulfil the necessary properties (see  \cite{wiki}). It can be found in \cite{hs}.

\begin{proposition}\label{welld}
$M_1$ is well-defined if  and only if 
\medskip

\begin{enumerate}
\item $\alpha_v \leq\min\{\beta_v,d_G(v)\}$  for all $v \in U$ and
\item $\sum\limits_{v \in U}\alpha_v\leq |V(G)|-1\leq \sum\limits_{v \in U}\min\{\beta_v,d_G(v)\}+|E(G-U)|$.
\end{enumerate}  
\end{proposition}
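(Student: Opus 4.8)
The plan is to read ``well-defined'' as the statement that the family
\[\mathcal{B}=\{B\subseteq E(G)\mid\ |B|=|V(G)|-1\text{ and }\alpha_v\le|B\cap\delta_G(v)|\le\beta_v\text{ for all }v\in U\}\]
satisfies the matroid basis axioms, i.e.\ that $\mathcal{B}\neq\emptyset$ and that $\mathcal{B}$ has the exchange property. The single structural fact that drives everything is that, since $U$ is stable, no edge of $G$ joins two vertices of $U$; hence the sets $\delta_G(v)$ with $v\in U$ are pairwise disjoint and, together with $E(G-U)=E(G)\setminus\delta_G(U)$, form a partition of $E(G)$. Consequently the intersection sizes $|B\cap\delta_G(v)|$ and $|B\cap E(G-U)|$ can be prescribed independently, each ranging over a contiguous interval of integers that is bounded above by the size of the corresponding block.

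For the only-if direction I would use that a well-defined $M_1$ has a basis $B\in\mathcal{B}$. Reading off the block constraints gives $\alpha_v\le|B\cap\delta_G(v)|\le\min\{\beta_v,d_G(v)\}$ for each $v\in U$, where the bound $d_G(v)$ comes from $|B\cap\delta_G(v)|\le|\delta_G(v)|$; this is the first condition. Summing the block sizes over the partition and using $|B|=|V(G)|-1$ then yields $\sum_{v\in U}\alpha_v\le|V(G)|-1\le\sum_{v\in U}\min\{\beta_v,d_G(v)\}+|E(G-U)|$, which is the second condition. For the converse non-emptiness I would observe that the sum of the per-block feasible intervals $[\alpha_v,\min\{\beta_v,d_G(v)\}]$ over $v\in U$ together with $[0,|E(G-U)|]$ is again a contiguous integer interval; the two stated conditions say precisely that this interval is non-empty and contains $|V(G)|-1$, so one can choose admissible block counts summing to $|V(G)|-1$ and realise them by actual edges, producing a member of $\mathcal{B}$.

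The heart of the argument is the exchange property, and this is the step I expect to need the most care. Given $B_1,B_2\in\mathcal{B}$ and $e\in B_1\setminus B_2$ lying in a block $P$ of the partition, I would split into two cases. If $(B_2\setminus B_1)\cap P\neq\emptyset$, I pick $f$ in that set and swap within the block: removing $e$ and adding $f$ leaves every block count unchanged, so $(B_1\setminus\{e\})\cup\{f\}\in\mathcal{B}$. Otherwise $B_2\cap P\subseteq B_1\cap P$, and since $e\in(B_1\cap P)\setminus B_2$ this forces $|B_2\cap P|\le|B_1\cap P|-1$; as $|B_2\cap P|$ is at least the lower bound of $P$, the count $|B_1\cap P|$ strictly exceeds that lower bound, so deleting $e$ keeps block $P$ admissible. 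Because $|B_1|=|B_2|$ and $P$'s count strictly dropped, some other block $Q$ satisfies $|B_1\cap Q|<|B_2\cap Q|$, which simultaneously gives room below the upper bound of $Q$ and, since $|(B_2\setminus B_1)\cap Q|\ge|B_2\cap Q|-|B_1\cap Q|>0$, an element $f\in(B_2\setminus B_1)\cap Q$; then $(B_1\setminus\{e\})\cup\{f\}$ respects all bounds and again lies in $\mathcal{B}$.

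The main obstacle is exactly the bookkeeping of the lower bounds in the exchange step: one must ensure that deleting $e$ never pushes a block below its $\alpha$-threshold. The case split above handles this cleanly, because whenever block $P$ sits exactly at its lower bound the presence of $e\in B_1\setminus B_2$ already forces $(B_2\setminus B_1)\cap P\neq\emptyset$, so the same-block swap of the first case applies. An alternative route, which I would mention but not pursue in detail, is to verify directly that the stated rank function $r_1=\min\{A(X),B(X)\}$ is monotone, subcardinal and submodular and therefore a matroid rank function with $r_1(E(G))=|V(G)|-1$ under the two conditions; this works as well but trades the short combinatorial exchange argument for a more computational submodularity check.
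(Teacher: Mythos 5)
Your proof is correct, but it follows a genuinely different route from the paper, because the paper does not prove Proposition~\ref{welld} at all: it simply cites H\"orsch and Szigeti \cite{hs}, where $M_1$ is analysed through its rank function. Your argument is instead a self-contained verification of the basis axioms, and it checks out. The structural observation that stability of $U$ makes the sets $\delta_G(v)$, $v\in U$, pairwise disjoint and, together with $E(G-U)$, a partition of $E(G)$ is exactly the right starting point; given it, non-emptiness of $\mathcal{B}$ is equivalent to conditions (i) and (ii) because the achievable vectors of block counts sum to a contiguous integer interval $\left[\sum_{v\in U}\alpha_v,\ \sum_{v\in U}\min\{\beta_v,d_G(v)\}+|E(G-U)|\right]$, and any admissible count vector is realisable by actual edges. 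Your two-case exchange argument is also sound: if the block $P$ containing $e$ meets $B_2\setminus B_1$ you swap inside $P$; otherwise $B_2\cap P\subseteq (B_1\cap P)\setminus\{e\}$ forces $|B_1\cap P|$ strictly above the lower bound of $P$, and since $|B_1|=|B_2|$ some block $Q$ has $|B_1\cap Q|<|B_2\cap Q|$, which simultaneously supplies an element $f\in(B_2\setminus B_1)\cap Q$ and the slack $|B_1\cap Q|+1\le|B_2\cap Q|\le\beta$-bound needed to add it. The trade-off between the two routes is worth noting: the paper's citation-based approach comes bundled with the explicit rank formula $r_1(X)=\min\{A(X),B(X)\}$, which its proof of Theorem~\ref{1} uses heavily and which your exchange argument does not produce; conversely, your argument makes the well-definedness claim self-contained. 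The alternative you sketch but do not pursue (verifying directly that $\min\{A(X),B(X)\}$ is monotone, subcardinal and submodular) is essentially the approach taken in \cite{hs}.
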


The \emph{second matroid $M_2$} is the well-known \emph{cycle matroid} of $G$, where $I \subseteq E(G)$ is an independent set of $M_2$ if $I$ induces an acyclic subgraph of $G$.
The rank function $r_2$ of $M_2$ for  $X \subseteq E(G)$ is  
\[r_2(X)=|V(G)|-\omega_{G}(X).\] 

The forthcoming Lemma \ref{inter} is the link between spanning trees of bounded degree and matroid intersection.

\begin{lemma}\label{inter}
Let $G$ be a graph, $U$ be a stable set, and $\{\alpha_v~|~v \in U\}$ and $\{\beta_v~|~v \in U\}$ be two sets of non-negative integers.
An edge set $X\subseteq E(G)$ induces a spanning tree $T$ of $G$ that satisfies $\alpha_v\leq d_T(v)\leq \beta_v$ for all $v\in U$ if and only if
\begin{enumerate}[topsep=0em, partopsep=0em, parsep=0em, itemsep=0em]
\item $M_1$ is well-defined and
\item $X$ is a common independent set of $M_1$ and $M_2$ of size $|V(G)|-1$.
\end{enumerate}
\end{lemma}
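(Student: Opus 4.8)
The plan is to prove the biconditional in \Cref{inter} by unpacking both sides into concrete combinatorial conditions and showing they match. The forward direction is the routine one: if $X$ induces a spanning tree $T$ with $\alpha_v \le d_T(v) \le \beta_v$ for all $v \in U$, then $|X| = |V(G)|-1$ and $X$ is acyclic, so $X$ is automatically a basis of the cycle matroid $M_2$ (it is independent of maximum size). Since $d_T(v) = |X \cap \delta_G(v)|$, the degree bounds are exactly the defining conditions of a basis of $M_1$, so $X$ is a basis of $M_1$ as well; in particular $M_1$ has a basis, which forces $M_1$ to be well-defined. Thus both items (i) and (ii) hold, and the common size $|V(G)|-1$ is immediate.

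For the reverse direction I would argue as follows. Assume $M_1$ is well-defined and $X$ is a common independent set of $M_1$ and $M_2$ with $|X| = |V(G)|-1$. Independence in $M_2$ means $X$ induces an acyclic subgraph; an acyclic subgraph on $V(G)$ with exactly $|V(G)|-1$ edges is a spanning tree $T$ (the standard fact that a forest on $n$ vertices with $n-1$ edges is connected, hence a tree). So it remains to recover the degree constraints from independence in $M_1$. Here the key step is to translate ``$X$ is independent in $M_1$'' — i.e. $|X| = r_1(X)$ — into the per-vertex inequalities $\alpha_v \le |X \cap \delta_G(v)| \le \beta_v$.

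The main obstacle, and the part deserving care, is exactly this translation using the rank formula $r_1(X) = \min\{A(X), B(X)\}$. Since $|X| = |V(G)|-1$, independence gives $A(X) \ge |V(G)|-1$ and $B(X) \ge |V(G)|-1$. From $A(X) \ge |V(G)|-1$ and the definition of $A$, I would deduce $\sum_{v \in U}\max\{\alpha_v - |X \cap \delta_G(v)|, 0\} \le 0$, and since every summand is non-negative, each must vanish, giving $|X \cap \delta_G(v)| \ge \alpha_v$ for all $v \in U$. For the upper bound I would combine $B(X) \ge |V(G)|-1 = |X|$ with the decomposition $|X| = \sum_{v \in U}|X \cap \delta_G(v)| + |X \setminus \delta_G(U)|$, which is valid because $U$ is stable so no edge of $X$ lies in $\delta_G(v)$ for two distinct $v \in U$ (the edge sets $\delta_G(v)$ for $v \in U$ partition $\delta_G(U) \cap X$). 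Subtracting $|X \setminus \delta_G(U)|$ from both sides of $B(X) \ge |X|$ yields $\sum_{v \in U}\min\{\beta_v, |X \cap \delta_G(v)|\} \ge \sum_{v \in U}|X \cap \delta_G(v)|$, so $\sum_{v \in U}\bigl(|X \cap \delta_G(v)| - \min\{\beta_v, |X \cap \delta_G(v)|\}\bigr) \le 0$; each term is non-negative, hence each is zero, forcing $|X \cap \delta_G(v)| \le \beta_v$ for all $v \in U$.

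Putting these together, $X$ induces a spanning tree $T$ with $d_T(v) = |X \cap \delta_G(v)|$ satisfying $\alpha_v \le d_T(v) \le \beta_v$ for all $v \in U$, completing the reverse direction. The delicate points to state carefully are that the stability of $U$ is what makes the degree sums disjoint (so the edge partition is clean), and that one must invoke \Cref{welld} only to guarantee $M_1$ is a genuine matroid so that ``basis / independent set of size $|V(G)|-1$'' is meaningful; the two items of \Cref{welld} are not used directly in the equivalence but underlie the well-definedness hypothesis in item (i).
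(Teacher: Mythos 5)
Your reverse direction (items (i) and (ii) imply the degree-bounded spanning tree) is correct and in fact more detailed than the paper's own argument: the paper simply observes that a common independent set of size $|V(G)|-1$ is a maximum-size independent set, hence a basis, of both matroids, so it satisfies the basis-defining degree conditions of $M_1$ and is acyclic with $|V(G)|-1$ edges; you instead extract the two bounds from the rank formula, using $r_1(X)=|X|=|V(G)|-1$ to force $A(X)\ge |V(G)|-1$ and $B(X)\ge |V(G)|-1$, and exploiting the stability of $U$ for the partition $|X|=\sum_{v\in U}|X\cap\delta_G(v)|+|X\setminus \delta_G(U)|$. Both routes are valid.

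The gap is in your forward direction, at the step ``$X$ is a basis of $M_1$ as well; in particular $M_1$ has a basis, which forces $M_1$ to be well-defined,'' reinforced by your closing remark that the two items of \Cref{welld} ``are not used directly.'' Non-emptiness of the family $\{B\subseteq E(G)\ :\ |B|=|V(G)|-1,\ \alpha_v\le |B\cap\delta_G(v)|\le\beta_v \text{ for all } v\in U\}$ does not, by any matroid axiom, imply that this family is the basis family of a matroid: for an arbitrary set system, non-emptiness says nothing about the basis-exchange property, and you cannot speak of ``a basis of $M_1$'' before $M_1$ is known to be a matroid. For this particular family the implication does happen to hold, but it needs an argument --- either a direct verification of the exchange axiom, or (as the paper does) a derivation of the two conditions of Proposition~\ref{welld} from the existence of $T$: the first condition follows from $\alpha_v\le d_T(v)\le\min\{\beta_v,d_G(v)\}$, and the second from
\[\sum_{v\in U}\alpha_v\le\sum_{v\in U}d_T(v)\le |V(G)|-1=\sum_{v\in U}d_T(v)+|E(T-U)|\le \sum_{v\in U}\min\{\beta_v,d_G(v)\}+|E(G-U)|,\]
where the middle equality and the disjointness of the sets $\delta_T(v)$, $v\in U$, both use that $U$ is stable. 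This is a short fix, and you already have the key ingredient (the partition identity) in your reverse direction; but as written, the well-definedness of $M_1$ --- which is part of what the forward direction must prove, not something you may presuppose --- is asserted rather than established.
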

\begin{proof}
The If-Part is easily seen as every common independent set $I$ of $M_1$ and $M_2$ induces an acyclic graph $F$ with $\alpha_v \leq |I \cap \delta_{F}(v)|\leq \beta_v$ for all $v \in U$. 
Clearly,  as $|X|=|V(G)|-1$, we obtain that $F$ is a tree.

For the Only-If-Part, we assume that $X\subseteq E(G)$ induces a spanning tree $T$ with \linebreak $\alpha_v \leq |I \cap \delta_{T}(v)|\leq \beta_v$ for all $v \in U$.
We have $\alpha_v \leq d_T(v)\leq \min\{\beta_v,d_G(v)\}$ for all $v \in U$ and 
\[\sum\limits_{v \in U}\alpha_v\leq \sum\limits_{v \in U}d_T(v)\leq |V(G)|-1= \sum\limits_{v \in U}d_T(v)+|E(T-U)|\leq \sum\limits_{v \in U}\min\{\beta_v,d_G(v)\}+|E(G-U)|.\]
By Proposition \ref{welld}, $M_1$ is well-defined. 
Furthermore, it is easily seen that $X$ is an independent set of $M_1$ and $M_2$.
\end{proof}

Now we are ready to proceed to the proofs of Theorem \ref{1} and \ref{2}.

\begin{proposition}\label{equiv}
Condition 
\[\sum\limits_{v \in S}\alpha_v\le |S|+|N(S)|-1 \qquad\textnormal{for all~~} \emptyset \neq S\subseteq U\tag{\ref{alpha}}\]
of Theorem {\em \ref{1}} is equivalent to 
\begin{equation}\label{alpha'}
 \sum\limits_{v \in S}\alpha_v\le |V(G)|-\omega_G(\delta_G(S)) \qquad\textnormal{for all~~} \emptyset \neq S\subseteq U. 
\end{equation}
\end{proposition}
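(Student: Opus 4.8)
The plan is to replace the right-hand side of (\ref{alpha'}) by a transparent combinatorial quantity and then compare it with the right-hand side of (\ref{alpha}) set by set. Write $H$ for the graph on vertex set $V(G)$ with edge set $\delta_G(S)$, so that $|V(G)|-\omega_G(\delta_G(S))=r_2(\delta_G(S))$ equals the number of edges of a spanning forest of $H$. Since $S\subseteq U$ is stable, every edge incident with $S$ has its other endvertex in $N(S)$; hence $\delta_G(S)$ is precisely the set of edges between $S$ and $N(S)$, every vertex of $N(S)$ is non-isolated in $H$, and every vertex outside $S\cup N(S)$ is isolated in $H$. Consequently the components of $H$ that meet $S\cup N(S)$ partition $S\cup N(S)$; writing $q$ for their number and letting component $j$ meet $S$ in $S_j$ and meet $N(S)$ in $N_j$, a maximal forest contributes $|S_j|+|N_j|-1$ edges per such component and nothing for the isolated vertices, giving the key identity
\[|V(G)|-\omega_G(\delta_G(S))=\sum_{j=1}^{q}\bigl(|S_j|+|N_j|-1\bigr)=|S|+|N(S)|-q.\]

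For the implication (\ref{alpha'})$\Rightarrow$(\ref{alpha}), fix $\emptyset\neq S\subseteq U$. Then $S\cup N(S)\neq\emptyset$, so $q\geq 1$, and the identity yields $|V(G)|-\omega_G(\delta_G(S))\leq |S|+|N(S)|-1$. Thus (\ref{alpha'}) at $S$ is at least as strong as (\ref{alpha}) at $S$, and this direction follows at once, pointwise in $S$ and with no summation needed.

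The substantial direction is (\ref{alpha})$\Rightarrow$(\ref{alpha'}). Fix $\emptyset\neq S\subseteq U$ and keep the notation $S_1,\dots,S_q$ and $N_1,\dots,N_q$. Each $S_j$ is non-empty: a component of $H$ meeting $N(S)$ also contains one of the corresponding $S$-neighbours, while a component meeting $S\cup N(S)$ but avoiding $N(S)$ is a single isolated vertex of $S$. The crux is to verify $N(S_j)=N_j$ for every $j$. Indeed, a neighbour $w$ of a vertex $v\in S_j$ lies outside $S$ by stability, hence in $N(S)$, and since $vw\in\delta_G(S)$ the vertices $v,w$ lie in the same component of $H$, forcing $w\in N_j$; conversely each vertex of $N_j$ is joined in $H$, hence in $G$, to a vertex of $S_j$. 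I expect this step — rather than the preceding bookkeeping — to be the main obstacle, as it is exactly where stability of $U$ is essential: it guarantees that each component yields a legitimate instance of (\ref{alpha}). Granting it, (\ref{alpha}) applied to each $\emptyset\neq S_j\subseteq U$ gives $\sum_{v\in S_j}\alpha_v\leq |S_j|+|N_j|-1$, and summing over $j$ together with $\sum_j|S_j|=|S|$ and $\sum_j|N_j|=|N(S)|$ produces $\sum_{v\in S}\alpha_v\leq |S|+|N(S)|-q$, which is (\ref{alpha'}) by the identity above.
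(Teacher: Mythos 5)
Your proof is correct and follows essentially the same route as the paper's: both rest on the identity $|V(G)|-\omega_G(\delta_G(S))=|S|+|N(S)|-q$ for the component decomposition of the graph with edge set $\delta_G(S)$, deduce (\ref{alpha}) from $q\ge 1$, and obtain (\ref{alpha'}) by applying (\ref{alpha}) to each component's trace $S_j$ and summing. The only difference is one of exposition: you explicitly verify $N(S_j)=N_j$ (the step where stability of $U$ enters), which the paper asserts implicitly when it says the sets $S_i\cup N(S_i)$ induce the components of $H_S$.
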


\begin{claimproof}
For $\emptyset \neq S\subseteq U$, let  $H_S$ be the graph with vertex set $S\cup N(S)$ and edge set $\delta_G(S)$. Since $|V(G)|-\omega_G(\delta_G(S))=|S|+|N(S)|-\omega(H_S)$, it follows that (\ref{alpha'}) is equivalent to 

\begin{equation}\label{alpha''}
 \sum\limits_{v \in S}\alpha_v\le |S|+|N(S)|-\omega(H_S)~ \qquad\textnormal{for all~~} \emptyset \neq S\subseteq U. 
\end{equation}
Clearly,  (\ref{alpha})  follows from (\ref{alpha''}) since $S\neq \emptyset$ implies $\omega(H_S)\ge 1$.
Conversely, let $S=\bigcup\limits_{i=1}^{\omega(H_S)} S_i$ be the unique decomposition of $S$ into mutually disjoint and non-empty sets $S_i$, where the sets $S_i\cup N(S_i)$ for $i=1,\ldots,\omega(H_S)$ induce the components of $H_S$.   Summing up  (\ref{alpha})  for all $S_i$ implies (\ref{alpha''}) for $S$.
\end{claimproof}

\begin{proof}[Proof of the Only-If-Part of Theorem \ref{1}:] \

Let us assume that $G$ contains a spanning tree $T$ such that $\alpha_v\leq d_T(v)\leq \beta_v$  for all $v \in U$. We have to show 
 that (\ref{alpha}) and (\ref{beta}) hold. 
 We will use the equality $|X|=\omega_G(E(T)\setminus X)-1$ for $X\subseteq E(T).$ Note that this fact 
 can be seen easily by induction on $|X|$. 

Let $S \subseteq U$ and  $X=E(T)\setminus  \delta_G(S)$. It follows
\begin{align*}
	\sum\limits_{v \in S}\alpha_v&\le \sum\limits_{v \in S}d_T(v)=|E(T)\cap  \delta_G(S)|=|V(G)|-1-|E(T)\setminus \delta_G(S)|\\
		&=|V(G)|-\omega_G(E(T) \cap \delta_G(S))\leq |V(G)|-\omega_G(\delta_G(S)).
\end{align*}
Thus, (\ref{alpha}) holds by Proposition \ref{equiv}.

Let $S \subseteq U$ and  $X=E(T)\cap \delta_G(S)$. We obtain
\begin{align*}
	\omega(G-S)+|S|-1&\leq \omega(T-S)+|S|-1 =\omega_G(E(T)\setminus \delta_G(S))-1 \\
		&=|E(T)\cap \delta_G(S)|= \sum\limits_{v \in S}d_T(v) \leq \sum\limits_{v \in S}\beta_v,
\end{align*}
Hence, (\ref{beta}) follows and our proof for the Only-If-Part is complete.
\end{proof}

\begin{proof}[Proof of the If-Part of Theorem {\ref{1}}:] \ 

Within this proof, we use the generalized partition matroid~$M_1$ and the cycle matroid~$M_2$. First of all, we verify well-definedness of $M_1$.

\medskip

\begin{proposition}\label{rechnung}
If Conditions {\em (\ref{alpha})} and {\em (\ref{beta})} of Theorem \ref{1} hold, then  $M_1$ is well-defined.
\end{proposition}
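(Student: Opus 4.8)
The plan is to deduce the two numerical conditions of Proposition~\ref{welld} directly from (\ref{alpha}) and (\ref{beta}); once both hold, well-definedness of $M_1$ follows immediately. Condition~(i) of Proposition~\ref{welld} splits into two parts. The bound $\alpha_v\le\beta_v$ is already part of the hypotheses of Theorem~\ref{1}, so nothing is needed there. For $\alpha_v\le d_G(v)$ I would apply (\ref{alpha}) to the singleton $S=\{v\}$: since $|N(\{v\})|=d_G(v)$, its right-hand side evaluates to $1+d_G(v)-1=d_G(v)$, giving the claim at once.

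For condition~(ii), the left inequality $\sum_{v\in U}\alpha_v\le |V(G)|-1$ I would obtain by applying (\ref{alpha}) with $S=U$ (for $U\neq\emptyset$): as $U$ and $N(U)$ are disjoint subsets of $V(G)$, the right-hand side $|U|+|N(U)|-1$ is at most $|V(G)|-1$. The degenerate case $U=\emptyset$ is trivial once one notes that (\ref{beta}) with $S=\emptyset$ forces $\omega(G)\le 1$, so $G$ is connected and hence $|V(G)|\ge 1$.

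The main obstacle is the right inequality of condition~(ii), namely $|V(G)|-1\le \sum_{v\in U}\min\{\beta_v,d_G(v)\}+|E(G-U)|$. Here a naive application of (\ref{beta}) with $S=U$ is too weak, since it produces $\sum_{v\in U}\beta_v$ rather than the required sum of the minima. The key idea is to apply (\ref{beta}) only to the set $U_1=\{v\in U: \beta_v\le d_G(v)\}$ on which the minimum equals $\beta_v$. For the remaining vertices the minimum is $d_G(v)$, and using that $U$ is stable one checks that every edge of $G-U_1$ either lies inside $V(G)\setminus U$ or joins $U\setminus U_1$ to $V(G)\setminus U$, so that $\sum_{v\in U\setminus U_1}d_G(v)+|E(G-U)|=|E(G-U_1)|$. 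Thus
\[
\sum_{v\in U}\min\{\beta_v,d_G(v)\}+|E(G-U)|=\sum_{v\in U_1}\beta_v+|E(G-U_1)|,
\]
and I would bound the two summands separately: the first by (\ref{beta}) applied to $S=U_1$, giving $\sum_{v\in U_1}\beta_v\ge \omega(G-U_1)+|U_1|-1$, and the second by the elementary fact that any graph $H$ has at least $|V(H)|-\omega(H)$ edges, giving $|E(G-U_1)|\ge (|V(G)|-|U_1|)-\omega(G-U_1)$. Adding these two estimates, the terms $\pm|U_1|$ and $\pm\omega(G-U_1)$ cancel and the bound collapses exactly to $|V(G)|-1$, which completes the verification.
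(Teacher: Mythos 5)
Your proposal is correct and follows essentially the same route as the paper's proof: condition (i) and the left half of (ii) via (\ref{alpha}) applied to $S=\{v\}$ and $S=U$, and the right half of (ii) by applying (\ref{beta}) to exactly the set the paper calls $S=\{v\in U~|~d_G(v)\ge\beta_v\}$ (your $U_1$), combined with the stability-based identity $|E(G-U_1)|=\sum_{v\in U\setminus U_1}d_G(v)+|E(G-U)|$ and the elementary bound $|E(H)|\ge|V(H)|-\omega(H)$. The only (harmless) difference is presentational: you add the two estimates directly and treat the degenerate case $U=\emptyset$ explicitly, while the paper sums its three displayed inequalities.
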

\begin{claimproof}
Taken $S=\{v\}$ for $v\in U$, we have $\alpha_v \le d_G(v)$ by (\ref{alpha}), and so $\alpha_v\le \min\{\beta_v,d_G(v)\}$. Furthermore, taken $S=U$, it follows
$\sum\limits_{v \in U}\alpha_v \leq |V(G)|-1$ from (\ref{alpha}).

As easily seen by induction on $\omega(G-S)$, we have 
\begin{equation}\label{rechnung:1}
|V(G)\setminus S|-|E(G-S)|-\omega(G-S)\leq 0
\end{equation}
for all graphs $G$ and for all $S\subseteq V(G)$.

Let $S=\{v\in U~|~d_G(v)\ge \beta_v\}$. Note that $S$ is possibly empty. As $U$ is stable, we have 
\begin{equation}\label{rechnung:2}
|E(G-S)|=\sum\limits_{v \in U\setminus S} d_G(v)+|E(G-U)|
\end{equation}
since $U$ is a stable set.
Moreover, 
\begin{equation}\label{rechnung:3}
\omega(G-S)+|S|-1\leq \sum\limits_{v \in S}\beta_v
\end{equation} by Condition (\ref{beta}).
Summing \eqref{rechnung:1}, \eqref{rechnung:2}, and \eqref{rechnung:3} leads to
\[|V(G)\setminus S|+|S|-1\leq \sum\limits_{v \in S} \beta_v + \sum\limits_{v \in U\setminus S} d_G(v)+|E(G-U)|.\] 
From 
\[|V(G)\setminus S|+|S|=|V(G)|\quad\textnormal{and}\quad \sum\limits\limits_{v \in S} \beta_v + \sum\limits_{v \in U\setminus S} d_G(v)=\sum\limits_{v \in U} \min\{\beta_v, d_G(v)\},\]
it further follows
\[|V(G)|-1\leq \sum\limits_{v \in U}\min\{\beta_v,d_G(v)\}+|E(G-U)|.\] Hence, 
$M_1$ is well-defined by Proposition \ref{welld}.\end{claimproof}

Having shown that $M_1$ and $M_2$ are well-defined, we are now ready to use matroid intersection. 
In order to apply Theorem~\ref{edm}, we use the forthcoming Proposition~\ref{r2}. It can be easily seen by using submodularity and subcardinality of $r_2$, and by the fact
\[E(G)\setminus X'=(E(G)\setminus X)\cup (X\setminus X').\]

\begin{proposition}\label{r2}
 If $X'\subseteq X\subseteq E(G)$, then $r_2(E(G)\setminus X')\le r_2(E(G)\setminus X)+ |X\setminus X'|$.
\end{proposition}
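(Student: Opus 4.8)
The plan is to follow the hint directly. Since $X' \subseteq X$, the complement $E(G)\setminus X'$ decomposes as $(E(G)\setminus X) \cup (X\setminus X')$, and crucially these two pieces are disjoint: one consists of edges lying outside $X$, while the other consists of edges lying inside $X$. So I would begin by recording this decomposition together with the disjointness, and also note that the cycle-matroid rank of the empty set is $r_2(\emptyset)=0$.

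Next, I would apply submodularity of $r_2$ to the sets $Y = E(G)\setminus X$ and $Z = X\setminus X'$. Their union is exactly $E(G)\setminus X'$ by the stated fact, and their intersection is empty by the disjointness observation. Hence submodularity gives $r_2(E(G)\setminus X') + r_2(\emptyset) \le r_2(E(G)\setminus X) + r_2(X\setminus X')$, which, using $r_2(\emptyset)=0$, simplifies to $r_2(E(G)\setminus X') \le r_2(E(G)\setminus X) + r_2(X\setminus X')$.

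Finally, I would bound the last term by subcardinality, namely $r_2(X\setminus X') \le |X\setminus X'|$, and substitute this into the previous inequality to obtain the claimed bound $r_2(E(G)\setminus X') \le r_2(E(G)\setminus X) + |X\setminus X'|$. There is essentially no obstacle in this argument; the only point requiring a moment's care is verifying the disjointness of $Y$ and $Z$, so that the intersection term vanishes, but this is immediate from the definitions of the two sets.
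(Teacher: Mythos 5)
Your proof is correct and follows exactly the route the paper indicates: it sketches this proposition via submodularity and subcardinality of $r_2$ together with the decomposition $E(G)\setminus X'=(E(G)\setminus X)\cup(X\setminus X')$, which is precisely your argument (with the disjointness observation making the intersection term $r_2(\emptyset)=0$ vanish). Nothing is missing; your write-up simply spells out the details the paper leaves to the reader.
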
 

We now assume that Conditions (\ref{alpha}) and (\ref{beta}) of Theorem \ref{1} hold and suppose, 
for the sake of a contradiction, that  $G$ does not contain a spanning tree $T$ with $\alpha_v\leq d_T(v)\leq \beta_v$ for all $v \in U$.
By Lemma \ref{inter}, there is no $I \subseteq E(G)$ with $|I|=|V(G)|-1$ being independent in both $M_1$  and $M_2$. By Theorem \ref{edm}, there are sets $X \subseteq E(G)$ such that 
\[r_1(X)+r_2(E(G)\setminus X)\leq |V(G)|-2.\]
We note that $r_1(X)=\min\{A(X),B(X)\}$, which leads to the following case distinction. 

\medskip

\begin{case}\label{case1}
There is a set $X\subseteq E(G)$ such that $A(X)+r_2(E(G)\setminus X) \leq |V(G)|-2.$
\end{case}
Let us choose a set $X\subseteq E(G)$ such that
\begin{enumerate}[topsep=0.2em, partopsep=0em, parsep=0em, itemsep=0em] 
\item[(a)] $A(X)+r_2(E(G)\setminus X)\leq |V(G)|-2$ and
\item[(b)] with respect to (a), $\{v~|~0<|X\cap \delta_{G}(v)|<\alpha_v, v\in U\}$ is of minimum size.
\end{enumerate}
Recall that 
 \[A(X)=|V(G)|-1-\sum\limits_{v \in U}\max\{\alpha_v-|X\cap \delta_{G}(v)|,0\}.\]

\begin{claim}\label{c1}
$\{v: 0<|X\cap \delta_{G}(v)|<\alpha_v, v\in U\}=\emptyset$

\end{claim}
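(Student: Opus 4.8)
The plan is to establish Claim~\ref{c1} by contradiction, exploiting the minimality condition~(b) in the choice of $X$. Suppose the set $\{v : 0 < |X \cap \delta_{G}(v)| < \alpha_v,\ v \in U\}$ is nonempty and fix a vertex $v$ in it; write $k = |X \cap \delta_{G}(v)|$, so that $0 < k < \alpha_v$. The idea is to delete from $X$ \emph{all} edges incident to $v$, forming $X' = X \setminus (X \cap \delta_{G}(v)) \subseteq X$, and then to show that $X'$ still satisfies inequality~(a) while having a strictly smaller set of deficient vertices, contradicting~(b).

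First I would compute the effect on $A$. Because $U$ is stable, every edge of $\delta_{G}(v)$ has its other endvertex outside $U$, so deleting $X \cap \delta_{G}(v)$ changes $|X \cap \delta_{G}(w)|$ for no vertex $w \in U \setminus \{v\}$; only the summand for $v$ in the definition of $A$ is affected. That summand changes from $\max\{\alpha_v - k,\, 0\} = \alpha_v - k$ to $\max\{\alpha_v - 0,\, 0\} = \alpha_v$, i.e.\ it increases by exactly $k$, whence $A(X') = A(X) - k$.

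Next I would control the rank term using Proposition~\ref{r2}. Since $X' \subseteq X$ with $|X \setminus X'| = k$, that proposition gives $r_2(E(G) \setminus X') \le r_2(E(G) \setminus X) + k$. Combining the two estimates,
\[
A(X') + r_2(E(G) \setminus X') \le (A(X) - k) + \bigl(r_2(E(G) \setminus X) + k\bigr) = A(X) + r_2(E(G) \setminus X) \le |V(G)| - 2,
\]
so $X'$ again satisfies condition~(a). Finally, since $|X' \cap \delta_{G}(v)| = 0$, the vertex $v$ no longer lies in the deficient set, while no vertex of $U \setminus \{v\}$ can have entered it (its intersection size is unchanged); hence the deficient set for $X'$ is strictly smaller, contradicting the minimality~(b). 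This forces the deficient set for $X$ to be empty, which is the claim.

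The main subtlety—the point where a naive argument fails—is that removing a \emph{single} edge incident to $v$ is not enough when $k > 1$: it would keep $|X' \cap \delta_{G}(v)|$ strictly between $0$ and $\alpha_v$, leaving $v$ deficient and shrinking nothing. Deleting all $k$ edges at once is what makes $v$ exit the deficient set outright, and it is precisely this choice that makes the drop in $A$ (by $k$) cancel the Proposition~\ref{r2} increase in $r_2$ (by at most $k$), so that~(a) is preserved. The stability of $U$ is essential throughout, since it guarantees that no other vertex of $U$ is disturbed by the deletion.
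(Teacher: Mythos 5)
Your proof is correct and follows essentially the same route as the paper: you delete all edges of $X$ incident to the chosen deficient vertex, use stability of $U$ to see that no other summand of $A$ changes, balance the drop $A(X')=A(X)-k$ against the increase of at most $k$ in $r_2(E(G)\setminus X')$ from Proposition~\ref{r2}, and contradict the minimality condition~(b). The paper's argument is word-for-word this computation (with $X'=X\setminus\delta_G(v_0)$ and $A(X)-A(X')=|X\setminus X'|$), so nothing further is needed.
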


\begin{claimproof}
For the sake of a contradiction, we suppose that  there is a vertex $v_0 \in U$ such that $0<|X\cap \delta_{G}(v_0)|< \alpha_{v_0}.$

Let $X'=X\setminus \delta_{G}(v_0)$. It follows $X'\cap \delta_{G}(v_0)= \emptyset$ and 
 $X'\cap \delta_{G}(v)=X\cap \delta_{G}(v)$ for all $v\in U\setminus \{v_0\}$ since $U$ is a stable set. Hence,
\begin{align*} 
A(X)-A(X') &= \sum\limits_{v \in U}\max\left\{\alpha_v- |X'\cap \delta_{G}(v)|,0\right\}-\sum\limits_{v \in U}\max\left\{\alpha_v- |X\cap \delta_{G}(v)|,0\right\}\\
&=\alpha_{v_0}-(\alpha_{v_0}-|X\cap \delta_{G}(v_0)|)
               = |X\cap \delta_{G}(v_0)|=|X\setminus X'|.
\end{align*}
 By Proposition \ref{r2}, 
\[A(X')+r_2(E(G)\setminus X')\leq A(X)+r_2(E(G)\setminus X)\leq |V(G)|-2.\]
However, this fact contradicts the choice of $X$ since $|X'\cap \delta_G(v_0)|=0$ and
\[\{v~|~0<|X'\cap \delta_{G}(v)|<\alpha_v\}\cup\{v_0\}=\{v~|~0<|X\cap \delta_{G}(v)|<\alpha_v\}.\\[-2.2em]\]
\end{claimproof}

Let  $S=\{v \in U~|~X\cap \delta_{G}(v)=\emptyset\}$. Note that $S$ is possibly empty. We have
\[A(X)=|V(G)|-1-\sum\limits_{v \in S}\alpha_v\] by Claim \ref{c1}.
Using
$\delta_G(S)\subseteq E(G)\setminus X$ and the monotonicity of $r_2$, it follows
\begin{align*}
     \sum_{v \in S}\alpha_v &= |V(G)|-1-A(X) \geq [A(X)+r_2(E(G)\setminus X)+1] -A(X)\\
     &\geq r_2(\delta_G(S))+1=|V(G)|-\omega_{G}(\delta_G(S))+1,
\end{align*}
which is impossible if  $S= \emptyset$. In case $S\neq \emptyset$, this fact together with Proposition \ref{equiv} contradicts Condition (\ref{alpha}) of Theorem \ref{1}, and so the proof of Case~\ref{case1} is complete.

\medskip

\begin{case}\label{case2}
There is a set $X\subseteq E(G)$ such that $B(X)+r_2(E(G)\setminus X) \leq |V(G)|-2.$
\end{case}

Let us choose a set $X\subseteq E(G)$ such that
\begin{enumerate}[topsep=0.2em, partopsep=0em, parsep=0em, itemsep=0em] 
\item[(a)] $B(X)+r_2(E(G)\setminus X)\leq |V(G)|-2$,
\item[(b)] with respect to (a), $X\setminus \delta_G(U)$ is of minimum size, and
\item[(c)] with respect to (a) and (b), $\{v: 0<|X\cap \delta_{G}(v)|<\beta_v, v\in U\}$ is of minimum size.
\end{enumerate}
Recall that 
 \[B(X)=\sum\limits_{v \in U}\min\{\beta_v,|X\cap \delta_G(v)|\}+|X\setminus \delta_G(U)|.\]

\begin{claim}\label{c2}
$X\subseteq \delta_G(U)$
\end{claim}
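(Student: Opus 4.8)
The plan is to prove Claim~\ref{c2} by a minimality argument analogous to the one used for Claim~\ref{c1}, this time exploiting the tie-breaking condition (b) on the size of $X\setminus\delta_G(U)$. Suppose for contradiction that $X\not\subseteq\delta_G(U)$, i.e.\ there is an edge $e\in X\setminus\delta_G(U)$. I would set $X'=X\setminus\{e\}$ and show that $X'$ still satisfies (a), while strictly decreasing $|X\setminus\delta_G(U)|$, contradicting the minimality in (b).

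\medskip

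First I would compute how removing $e$ affects $B$. Since $e\notin\delta_G(U)$, the edge $e$ contributes nothing to any $\delta_G(v)$ for $v\in U$, so the terms $\min\{\beta_v,|X\cap\delta_G(v)|\}$ are unchanged when passing from $X$ to $X'$; only the last summand drops, giving $B(X')=B(X)-1$ because $e\in X\setminus\delta_G(U)$. On the matroid side, I would invoke Proposition~\ref{r2} with $X'\subseteq X$ and $|X\setminus X'|=1$ to obtain
\[r_2(E(G)\setminus X')\le r_2(E(G)\setminus X)+1.\]
Adding these two estimates yields
\[B(X')+r_2(E(G)\setminus X')\le (B(X)-1)+(r_2(E(G)\setminus X)+1)=B(X)+r_2(E(G)\setminus X)\le |V(G)|-2,\]
so $X'$ satisfies (a). Moreover $X'\setminus\delta_G(U)=(X\setminus\delta_G(U))\setminus\{e\}$ has strictly smaller size than $X\setminus\delta_G(U)$, contradicting the minimality stipulated in (b). Hence no such edge $e$ exists and $X\subseteq\delta_G(U)$.

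\medskip

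The step I expect to require the most care is the interaction between the two competing penalty terms in $B(X)$: the $\min\{\beta_v,\cdot\}$ terms and the $|X\setminus\delta_G(U)|$ term. The key observation that makes the argument clean is that an edge outside $\delta_G(U)$ touches no vertex of $U$, so deleting it leaves all the degree-capped terms $\min\{\beta_v,|X\cap\delta_G(v)|\}$ completely untouched and decreases $B$ by exactly one; this is precisely where stability of $U$ (and the definition of $\delta_G(U)$) is used. One should double-check that removing $e$ cannot accidentally violate (a) via a large jump in $r_2$, but Proposition~\ref{r2} bounds that jump by $|X\setminus X'|=1$, which is exactly compensated by the unit drop in $B$. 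With these two bookkeeping facts in hand the contradiction with (b) is immediate, so the main obstacle is conceptual rather than computational: recognizing that the right move is to delete a single edge outside $\delta_G(U)$ and that condition (b) is tailored to forbid it.
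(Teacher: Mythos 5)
Your proof is correct and follows essentially the same approach as the paper: exploit minimality condition (b) by passing to a subset of $X$ contained in $\delta_G(U)$, observe that $B$ drops by exactly $|X\setminus X'|$ (using stability of $U$ so the capped degree terms are untouched), and absorb the possible increase of $r_2$ via Proposition~\ref{r2}. The only cosmetic difference is that you delete a single edge $e\in X\setminus\delta_G(U)$ while the paper deletes all of $X\setminus\delta_G(U)$ at once by setting $X'=X\cap\delta_G(U)$; the bookkeeping is identical.
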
%
\begin{claimproof}
For the sake of a contradiction, we suppose $X\setminus \delta_G(U)\neq \emptyset$. Let $X'=X \cap \delta_G(U)$. 

Hence, $|X'\setminus \delta_G(U)|=0 $, $X'\cap \delta_G(U)=X'$,  $X\setminus X'=X\setminus \delta_G(U)$, and
$B(X)-B(X')
=|X\setminus X'|$.
By Proposition \ref{r2}, we have 
\[B(X')+r_2(E(G)\setminus X')
\leq 
B(X)+r_2(E(G)\setminus X)\leq |V(G)|-2.\] 
However, this fact contradicts the choice of $X$ since $|X'\setminus \delta_G(U)|=0<|X\setminus \delta_G(U)|$.
\end{claimproof}

\begin{claim}\label{c3}
$\{v: 0<|X\cap \delta_{G}(v)|<\beta_v, v\in U\}=\emptyset$
\end{claim}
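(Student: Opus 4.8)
The plan is to mirror the structure of Claim \ref{c1} from Case \ref{case1}, now adapting the exchange argument to the upper bounds $\beta_v$ instead of the lower bounds $\alpha_v$. Suppose for contradiction that there is a vertex $v_0 \in U$ with $0 < |X \cap \delta_G(v_0)| < \beta_{v_0}$. The natural move is to delete exactly one edge of $X$ incident to $v_0$: set $X' = X \setminus \{e\}$ for some $e \in X \cap \delta_G(v_0)$. By Claim \ref{c2} we already know $X \subseteq \delta_G(U)$, so $X \setminus \delta_G(U) = \emptyset$ and the term $|X \setminus \delta_G(U)|$ in $B$ vanishes for both $X$ and $X'$; this keeps condition (b) intact and lets us focus purely on the $\sum_{v \in U} \min\{\beta_v, |X \cap \delta_G(v)|\}$ part.

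The key computation will be to show that removing the single edge $e$ decreases $B$ by exactly one. Since $0 < |X \cap \delta_G(v_0)| < \beta_{v_0}$, we have $|X \cap \delta_G(v_0)| \le \beta_{v_0}$, so $\min\{\beta_{v_0}, |X \cap \delta_G(v_0)|\} = |X \cap \delta_G(v_0)|$, and likewise $\min\{\beta_{v_0}, |X' \cap \delta_G(v_0)|\} = |X \cap \delta_G(v_0)| - 1$. For every other $v \in U$ the incidence count is unchanged because $U$ is stable, so no single edge of $\delta_G(U)$ lies in two sets $\delta_G(v), \delta_G(v')$ with $v, v' \in U$. Hence $B(X) - B(X') = 1 = |X \setminus X'|$. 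Applying Proposition \ref{r2} with $X' \subseteq X$ then gives
\[
B(X') + r_2(E(G) \setminus X') \le B(X) + r_2(E(G) \setminus X) \le |V(G)| - 2,
\]
so $X'$ still satisfies condition (a), and it still satisfies (b) since $X' \subseteq X \subseteq \delta_G(U)$.

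Finally I would derive the contradiction with the minimality condition (c). The set $\{v : 0 < |X' \cap \delta_G(v)| < \beta_v, v \in U\}$ differs from the corresponding set for $X$ only at $v_0$: either $|X' \cap \delta_G(v_0)|$ drops to zero, removing $v_0$ from the set, or it stays strictly between $0$ and $\beta_{v_0}$ but no other vertex's status can worsen, so the set does not grow and in fact shrinks by the vertex $v_0$ if its count reached zero. To make this clean, I expect the main obstacle to be handling the borderline case $|X \cap \delta_G(v_0)| = 1$, where deleting $e$ sends the count to $0$ and $v_0$ leaves the set, versus $|X \cap \delta_G(v_0)| > 1$, where the count stays positive and below $\beta_{v_0}$. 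In either subcase the set $\{v : 0 < |X' \cap \delta_G(v)| < \beta_v\}$ is a proper subset of $\{v : 0 < |X \cap \delta_G(v)| < \beta_v\}$ or has strictly smaller size, contradicting the minimality in (c). This completes the proof of the claim. Once Claims \ref{c2} and \ref{c3} are established, the closing argument of Case \ref{case2} should set $S = \{v \in U : |X \cap \delta_G(v)| \ge \beta_v\}$ (the vertices contributing their full $\beta_v$) and contradict Condition (\ref{beta}) in a manner dual to the end of Case \ref{case1}.
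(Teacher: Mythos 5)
Your exchange step removes a single edge $e \in X \cap \delta_G(v_0)$, and everything you say about conditions (a) and (b) surviving this step is correct: $B(X)-B(X')=1=|X\setminus X'|$ (since $|X\cap \delta_G(v_0)|<\beta_{v_0}$ the minimum for $v_0$ is attained by the count, and stability of $U$ ensures no other vertex's term changes), Proposition \ref{r2} then gives $B(X')+r_2(E(G)\setminus X')\le |V(G)|-2$, and $X'\subseteq X\subseteq \delta_G(U)$ keeps (b) intact. The gap is in the final step. When $|X\cap \delta_G(v_0)|\ge 2$, the new count $|X'\cap \delta_G(v_0)|=|X\cap \delta_G(v_0)|-1$ is still strictly between $0$ and $\beta_{v_0}$, so $v_0$ remains in the set $\{v:\, 0<|X'\cap \delta_G(v)|<\beta_v,\ v\in U\}$; since no other vertex's status changes, this set is \emph{exactly equal} to the one for $X$ --- not a proper subset, and not of strictly smaller size. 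Your sentence claiming that ``in either subcase'' the set shrinks is therefore false in the second subcase: $X'$ is merely another minimizer satisfying (a), (b), (c), and no contradiction with the minimality in (c) arises.

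The repair is what the paper actually does: delete \emph{all} edges of $X$ incident to $v_0$ at once, i.e.\ take $X'=X\setminus \delta_G(v_0)$. The same computation gives $B(X)-B(X')=|X\cap \delta_G(v_0)|=|X\setminus X'|$ (again because the minimum for $v_0$ equals $|X\cap \delta_G(v_0)|$, which lies below $\beta_{v_0}$), Proposition \ref{r2} preserves (a), condition (b) is untouched, and now $|X'\cap \delta_G(v_0)|=0$ forces $v_0$ out of the (c)-set while no other vertex enters it --- the desired contradiction. Alternatively, you could iterate your single-edge deletion $|X\cap \delta_G(v_0)|$ times, each step preserving (a) and (b), and invoke (c) only once the count at $v_0$ reaches zero; but as written, your one-step argument does not close.
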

\begin{claimproof}
For the sake of a contradiction, we suppose that there is a vertex $v_0 \in U$ such that $0<|X\cap \delta_{G}(v_0)|<\beta_{v_0}$. 
Let $X'=X\setminus \delta_{G}(v_0)$.
Since 
$X'\subseteq X\subseteq \delta_G(U)$, it follows
\begin{align*} 
B(X)-B(X') &= \sum\limits_{v \in U}\min\left\{\beta_v,|X\cap \delta_{G}(v)|\right\}-\sum\limits_{v \in U}\min\left\{\beta_v,|X'\cap \delta_{G}(v)|\right\}\\
               &= |X\cap \delta_{G}(v_0)|-|X'\cap \delta_{G}(v_0)|=|X\setminus X'|.
\end{align*}
By Proposition \ref{r2}, 
\[B(X')+r_2(E(G)\setminus X')\leq B(X)+r_2(E(G)\setminus X)\leq |V(G)|-2.\]
However, this fact together with the fact $X'\setminus \delta_G(U)=\emptyset$ contradicts the choice of $X$ since
$0=|X'\cap \delta_{G}(v_0)|$
and
\[\{v: 0<|X'\cap \delta_{G}(v)|<\beta_v, v\in U\}\cup\{v_0\}=\{v: 0<|X\cap \delta_{G}(v)|<\beta_v, v\in U\}.\\[-2.2em]\]
\end{claimproof}

Let  $S=\{v \in U~|~|X\cap \delta_{G}(v)|\ge \beta_v \}$. Note that $S$ is possibly empty. We have $X\cap \delta_{G}(v)=\emptyset$ for all $v\in U\setminus S$ by Claim~\ref{c3}.
Therefore, since $X\subseteq \delta_{G}(U)$ by the choice of $X$, we further have $X\subseteq \delta_{G}(S)$.
Additionally, Claim \ref{c3} now implies $B(X)=\sum\limits_{v \in S}\beta_v$. We obtain
\begin{align*}
	\sum\limits_{v \in S}\beta_v=B(X)&\leq |V(G)|-2-r_2(E(G)\setminus X)=\omega_G(E(G)\setminus X)-2\leq \omega_G(E(G)\setminus \delta_G(S))-2 \\ &= \omega(G-S)+|S|-2,
\end{align*}
a contradiction to Condition (\ref{beta}) of Theorem \ref{1}, and so the proof of Case~\ref{case2} is complete.

As Case 1 and Case 2 cover every possible situation, our proof for the If-Part is complete.
\end{proof}

\begin{proof}[Proof of Theorem {\em \ref{2}}]
By Lemma~\ref{inter}, it suffices to check if 
\begin{enumerate}[topsep=0em, partopsep=0em, parsep=0em, itemsep=0em]
\item $M_1$ is well-defined and
\item $M_1$ and $M_2$ have a common independent set of size $|V(G)|-1$.
\end{enumerate}
Using Proposition~\ref{welld}, (i) can be checked in linear time, and we can draw our attention to checking~(ii). Note that a basis of $M_2$ is an edge set of a spanning tree of $G$, and thus is of size $|V(G)|-1$. Hence, Theorem~\ref{edmalg} completes the proof.
\end{proof}

 \section*{Declarations}
No data has been used. There are no competing interests. All authors contributed equally.


\begin{thebibliography}{99}
\bibitem{RhD} R.\;Diestel, Graph Theory: 5th edition, Springer Graduate Texts in Mathematics, Springer-Verlag, isbn: 978-3-96134-005-7, 2017.
\bibitem{e4} J.\;Edmonds, Some well-solved problems in combinatorial optimization, in: Combinatorial Programming: Methods and Applications (Proceedings NATO Advanced Study Institute, Versailles, 1974. B. Roy, ed.), Reidel, Dordrecht, 285-301, 1975.
\bibitem{e3} J.\;Edmonds, Submodular functions, matroids, and certain polyhedra, in: eds. R. Guy, H. Hanani, N. Sauer, and J. Sch{\"o}nheim, Combinatorial Structures and their Applications, Gordon and Breach, New York, 69-70, 1970.
\bibitem{e2} J.\;Edmonds, Matroid intersection, Annals of Discrete Mathematics, 4, 39-49, 1979.
\bibitem{book} A.\;Frank, Connections in Combinatorial Optimization, Oxford University Press, 2011.

\bibitem{hs} F.\;H\"orsch and Z.\;Szigeti, Packing$~$of Mixed Hyperarborescences with Flexible Roots via Matroid Intersection, \\ https://www.combinatorics.org/ojs/index.php/eljc/article/view/v28i3p29

\bibitem{gj} M. R.\;Garey and D. S. \;Johnson, Computers and Intractability: A Guide to the Theory of NP-Completeness, W.H. Freeman, isbn: 978-0-7167-1045-5,  1979.



\bibitem{OY} K.\;Ozeki and T.\;Yamashita, Spanning Trees: A Survey, Graphs and Combinatorics 27, 1-26, 2011. 
\bibitem{SL} M.\;Singh and L.\;Chi Lau, Approximating Minimum Bounded Degree Spanning Trees to within One of Optimal, Journal of the ACM 62, 1-19, 2015.

\bibitem{wiki} https://en.wikipedia.org/wiki/Matroid 






\end{thebibliography}
\end{document}